\newtheorem{thm}{Theorem}
\newcommand{\nin}{\not\in}
\newtheorem{corollary}[thm]{Corollary}
\newtheorem{lemma}{Lemma}
\newtheorem{theorem}[thm]{Theorem}
\newtheorem{proposition}[thm]{Proposition}
\newtheorem*{lemma*}{Lemma}
\theoremstyle{definition}
\newtheorem{remark}[thm]{Remark}
\newtheorem{definition}[thm]{Definition}
\newcommand{\seco}{\searrow\!\!\!\searrow }     
\title[A strong collapse increasing geometric simplicial
LS category]{A strong collapse increasing the geometric simplicial
		Lusternik-Schnirelmann category}	
	\author{Dimitris Askitis}
	\address{Department of Mathematical Sciences\\
	University of Copenhagen\\
	Universitetsparken 5\\
	Copenhagen 2100\\
	Denmark}
\email{dimitrios@math.ku.dk}
\begin{document}

\begin{abstract}
	In {\cite{scat1}}, after defining notions of LS category in the simplicial context, the authors show that the geometric simplicial LS category is non-decreasing under strong collapses. However, they do not give examples where it increases strictly, but they conjecture that such an example should exist, and thus that the
	geometric simplicial LS category is not strong homotopy invariant. The
	purpose of this note is to provide with such an example. We construct a simplicial
	complex whose simplicial and geometric simplicial LS categories are different, and using
	this, we provide an example of a strong collapse that increases the
	geometric simplicial LS category, thus settling the geometric simplicial LS category
	not being strong homotopy invariant.
\end{abstract}
\maketitle

\section{Introduction}

The Lusternik-Schnirelmann category (for short, LS category) for a
topological space $X$ is defined as the smallest integer $k$ such that there
is an open covering $\{ U_j)_{j \leqslant k + 1}$ of cardinality $k + 1$ of
$X$ such that the inclusion maps $i_j : U_j \hookrightarrow X$ are
nullhomotopic. It is an important homotopy invariant, providing an upper bound
for the critical points of a manifold, among several other applications..

In {\cite{scat1}}, the authors introduced the simplicial LS category, i.e. a
notion of LS category for simplicial complexes. The advantage of their
simplicial version is that it is a strongly homotopy invariant and it depends
only in the simplicial structure, not on the chosen geometric realisation. For
further information on the simplicial LS category, see also {\cite{scat2}}. A
nice introduction, in relation with finite topological spaces and using
category theoretic language, may also be found in {\cite{erica}}.

Let $K, L$ be simplicial complexes. Two simplicial maps $\varphi, \psi : K
\rightarrow L$ are said to be \textit{contiguous} if for every simplex
$\sigma \in K$, $\varphi (\sigma) \cap \psi (\sigma)$ is a simplex in $L$. The
contiguity relation is denoted by $\varphi \sim_c \psi$. The relation $\sim_c$
is symmetric and reflexive, but in general it is not transitive. Thus, as a
simplicial equivalent to homotopy, the notion of the contiguity class needs to
be introduced:

\begin{definition}
  Let $K, L$ be simplicial complexes. Two simplicial maps $\varphi, \psi : K
  \rightarrow L$ belong to the same contiguity class ($\varphi \sim \psi$) if
  there is a sequence $\{ \varphi_i \}_{i \leqslant n}$ of maps from $K$ to
  $L$ such that $\varphi = \varphi_0 \sim_c \varphi_1 \sim_c \varphi_2 \sim_c
  \cdots \sim_c \varphi_n = \psi$.
\end{definition}
The role of sets whose inclusion is nullhomotopic is to be played by
categorical subcomplexes.
\begin{definition}
  Let $K$ be a simplicial complex. We say that a subcomplex $U \subset K$ is
  categorical if there exists a vertex $v \in K$ such that the inclusion map
  $i_U : U \hookrightarrow K$ and the constant map $c_v$ are in the same
  contiguity class, i.e. $i_U \sim c_v$.
\end{definition}

\begin{definition}
  The simplicial LS category of a simplicial complex $K$, denoted by
  $\text{scat} K$, is the least integer $k$ such that $K$ can be covered by $k
  + 1$ categorical subcomplexes. Such a cover is called categorical.
\end{definition}

Let $K$ be a simplicial complex and $u, v \in K$ be two vertices. If for every
maximal simplex $\tau \in K$ such that $u \in \tau$, we have that $v \in K$,
we say that $u$ is \textit{dominated} by $v$. Deleting such a vertex and
removig all simplices that contain it is called \textit{elementary strong
collapse} and results in the simplicial complex $K \setminus u$. We say that
$K$ \textit{strong collapses} to $L$ if there is a sequence of elementary
strong collapses from $K$ to $L$, and we denote it by $K \seco L$.
The inverse procedure, going from $L$ back to $K$ by adding dominated
vertices, is called a \textit{strong expansion}. We say that $K$ and $L$
have the same strong homotopy type if there is a sequence of strong collapses
and expansions from $K$ to $L$. A well known result (see {\cite[Corollary
2.12]{Barmak2012}}) is that $K, L$ have the same strong homotopy type if and
only if there are maps $\varphi : K \rightarrow L$ and $\psi : L \rightarrow
K$ such that $\varphi \circ \psi \sim \text{Id}_L$ and $\psi \circ \varphi
\sim \text{Id}_K$. Then, we denote $K \sim L$. The simplicial LS category is a
strong homotopy invariant.

Another related notion is that of the geometric simplicial LS category, based
on the notion of strong collapsibility. As conjectured in \cite{scat1} and
proven in the present note, this is not a strong homotopy invatiant. There are
similar notions based on simple collapsibility, see {\cite{aaronson}}.

\begin{definition}
  Let $K$ be a simplicial complex. We say that $K$ is strongly collapsible if
  it strongly collapses to a point, i.e. if $\text{Id}_K \sim c_v$, where
  $c_v$ is the constant map to a vertex $v \in K$.
\end{definition}

\begin{definition}
  The geometric simplicial category of a simplicial complex $K$, denoted by
  $\text{gscat} K$, is the least integer $k$ such that $K$ can be covered by
  $k + 1$ strongly collapsible subcomplexes. Such a cover is called geometric.
\end{definition}

By the definitions above, a strongly collapsible subcomplex is also
categorical, but the opposite does not always hold. In fact, a categorical
subcomplex need not even be connected, while a strongly collapsible one is
necessarily connected. The following results relate these two different
notions of LS category for simplicial complexes.

\begin{proposition}
  {\cite[Proposition 4.2]{scat1}} Let $K$ be a simplicial complex. Then,
  $\textnormal{scat} K \leqslant \textnormal{gscat} K$.\label{prop1}
\end{proposition}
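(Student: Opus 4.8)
The plan is to reduce the inequality to one structural fact: every strongly collapsible subcomplex of $K$ is categorical in $K$. Granting this, let $\{U_0, \dots, U_k\}$ be a geometric cover realising $\operatorname{gscat} K = k$, so that each $U_j$ is strongly collapsible. If each $U_j$ is categorical in $K$, then this very cover is a categorical cover of $K$ of cardinality $k+1$, and since $\operatorname{scat} K$ is by definition the least such $k$, we obtain $\operatorname{scat} K \leqslant k = \operatorname{gscat} K$. So the whole proposition rests on the pointwise implication ``strongly collapsible $\Rightarrow$ categorical''.

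To establish that implication, fix a strongly collapsible subcomplex $U \subseteq K$. By definition there is a vertex $v \in U$ with $\text{Id}_U \sim c_v$ as self-maps of $U$, i.e.\ a contiguity sequence $\text{Id}_U = \varphi_0 \sim_c \varphi_1 \sim_c \cdots \sim_c \varphi_n = c_v$ among simplicial maps $U \to U$. I want to push this sequence forward along the inclusion $i_U : U \hookrightarrow K$ to conclude $i_U \sim c_v$ as maps $U \to K$, which is precisely categoricity of $U$.

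The key lemma I would isolate first is that contiguity is preserved under post-composition with a simplicial map: if $\varphi \sim_c \psi$ as maps $U \to U$ and $f : U \to K$ is simplicial, then $f \circ \varphi \sim_c f \circ \psi$. This is a direct check, since for every simplex $\sigma \in U$ one has $(f\circ\varphi)(\sigma) \cup (f\circ\psi)(\sigma) = f\big(\varphi(\sigma) \cup \psi(\sigma)\big)$, which is the image under $f$ of a simplex of $U$ (the union $\varphi(\sigma)\cup\psi(\sigma)$ being a simplex by contiguity), hence a simplex of $K$. Applying this with $f = i_U$ to each consecutive pair of the sequence above gives $i_U = i_U\circ\text{Id}_U \sim_c \cdots \sim_c i_U\circ c_v = c_v$, where on the right $c_v$ now denotes the constant map $U \to K$ at the vertex $v \in U \subseteq K$. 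Thus $i_U$ lies in the contiguity class of a constant map, so $U$ is categorical, which completes the argument.

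I expect the only genuine subtlety, rather than a real obstacle, to be notational bookkeeping: the self-map $c_v : U \to U$ and the constant map $c_v : U \to K$ are formally distinct, and the contiguity sequence lives first among maps $U \to U$ before being transported into maps $U \to K$ via $i_U$. Note that no transitivity of $\sim_c$ is invoked --- which is fortunate, since $\sim_c$ need not be transitive --- because we merely post-compose an already given contiguity sequence term by term.
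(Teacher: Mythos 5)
Your proof is correct and follows essentially the same route as the paper, which cites the result and justifies it by exactly your key observation that a strongly collapsible subcomplex is categorical (obtained by pushing the contiguity sequence for $\text{Id}_U \sim c_v$ forward along the inclusion $i_U$, so that a geometric cover is automatically a categorical one). The only cosmetic discrepancy is that you verify contiguity via $\varphi(\sigma)\cup\psi(\sigma)$, the standard definition, while the paper's Definition~1 writes $\varphi(\sigma)\cap\psi(\sigma)$ (evidently a typo); since you only ever post-compose with the injective inclusion map, your argument goes through under either reading.
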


\begin{theorem}
  {\cite[Theorem 4.3]{scat1}} Let $M, K$ be simplicial complexes such that $K$
  is a strong collapse of $M$. Then, $\textnormal{gscat} M \leqslant \textnormal{gscat}
  K$.\label{prop2}
\end{theorem}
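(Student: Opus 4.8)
The plan is to reduce to the case of a single elementary strong collapse and then transport a geometric cover of $K$ up to $M$ through the collapsing retraction. Since any strong collapse $M \seco K$ factors as a finite sequence of elementary strong collapses $M = M_0 \seco M_1 \seco \cdots \seco M_n = K$, it suffices to establish $\mathrm{gscat}\, M_i \leqslant \mathrm{gscat}\, M_{i+1}$ for one elementary step and chain the inequalities. So I assume $K = M \setminus u$, where $u$ is a vertex of $M$ dominated by a vertex $v$. First I would introduce the vertex map $r : M \to K$ sending $u \mapsto v$ and fixing every other vertex, and check that it is simplicial: for a simplex $\sigma$ with $u \in \sigma$, domination places $\sigma$ inside a maximal simplex of $M$ containing both $u$ and $v$, so $r(\sigma) = (\sigma \setminus u) \cup \{v\}$ is a face of that maximal simplex and hence a simplex of $K$.

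Next, starting from a geometric cover $U_0, \dots, U_k$ of $K$ of minimal size $k = \mathrm{gscat}\, K$, I would define $\hat U_j := r^{-1}(U_j) = \{\sigma \in M : r(\sigma) \in U_j\}$. Because $r$ is simplicial and each $U_j$ is a subcomplex, every $\hat U_j$ is a subcomplex of $M$; and since $r(\sigma) \in K = \bigcup_j U_j$ for every simplex $\sigma$ of $M$, the family $\{\hat U_j\}_{j \leqslant k}$ covers $M$. It then remains to show that each $\hat U_j$ is strongly collapsible, which would exhibit a geometric cover of $M$ of size $k + 1$ and yield $\mathrm{gscat}\, M \leqslant k = \mathrm{gscat}\, K$.

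The crux is verifying that each $\hat U_j$ is strongly collapsible, and this is the step I expect to be the main obstacle. If $u \nin \hat U_j$ then $\hat U_j = U_j$ and there is nothing to prove. Otherwise $\{u\} \in \hat U_j$ forces $\{v\} = r(\{u\}) \in U_j$, so $v \in \hat U_j$, and I would argue that $u$ is dominated by $v$ inside $\hat U_j$: for a maximal simplex $\tau$ of $\hat U_j$ with $u \in \tau$ but $v \nin \tau$, domination in $M$ gives $\tau \cup \{v\} \in M$, while $r(\tau \cup \{v\}) = r(\tau) \in U_j$ shows $\tau \cup \{v\} \in \hat U_j$, contradicting maximality. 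Hence $\hat U_j \seco \hat U_j \setminus u$, and a direct check identifies $\hat U_j \setminus u$ with $U_j$, since the simplices of $\hat U_j$ avoiding $u$ are exactly those of $U_j$ (on which $r$ is the identity). As $U_j$ is strongly collapsible by hypothesis, composing collapses shows $\hat U_j$ is strongly collapsible. The delicate points to get right are precisely this internal domination argument --- ensuring that completing $\tau$ by $v$ lands inside $\hat U_j$ and not merely inside $M$ --- and the identification $\hat U_j \setminus u = U_j$, after which the conclusion follows by counting.
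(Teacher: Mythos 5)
Your proof is correct: the vertex map $r : M \to K$ sending $u \mapsto v$ is indeed simplicial, the pullbacks $\hat U_j = r^{-1}(U_j)$ are subcomplexes covering $M$, and your internal domination argument (completing a maximal $\tau \ni u$ of $\hat U_j$ by $v$ stays in $\hat U_j$ because $r(\tau \cup \{v\}) = r(\tau)$) together with the identification $\hat U_j \setminus u = U_j$ --- or $\hat U_j = U_j$ outright when $v \notin U_j$ --- does yield strong collapsibility of each piece, and the reduction to a single elementary collapse is legitimate by definition of $\seco$. Note that the paper itself gives no proof of this statement, quoting it from \cite[Theorem 4.3]{scat1}, so there is nothing internal to compare against; your pullback-along-the-retraction argument is the standard route to this result.
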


The authors in \cite{scat1} remark that they did not have an example that
satisfies the strict inequality in Theorem \ref{prop2}, but that, based on
similar results regarding partially ordered sets and beat points, such an
example should exist. In this short note, our purpose is to construct examples
that satisfy these inequalities strictly, i.e. showing that the inequalities
above do not degenerate to equalities. First, in section $2$, we construct an
example that satisfies the strict inequality in Proposition \ref{prop1}, with
the further property that it has minimal categorical cover which consists of
connected subcomplexes. This example is important in constructing a second one
satisfying the strict inequality in Theorem \ref{prop2}, as the simplicial LS
category is strong homotopy invariant, hinting that this second example should
be searched among cases where simplicial and geometric simplicial LS categories
are not equal.

The main contribution of this note is the next proposition, proven in
sections $2$ and $3$, and the follow-up corollary, derived directly from it.

\begin{proposition}
  There is a simplicial complex $K$ such that $\textnormal{scat} K < \textnormal{gscat}
  K$. Moreover, there is a simplicial complex $M$ such that $M \seco
  K$ and $\textnormal{gscat} M < \textnormal{gscat} K$\label{myprop}.
\end{proposition}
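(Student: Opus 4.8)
The plan is to prove both assertions by explicit construction, carried out in the two sections that follow; I outline the architecture here.

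\emph{Part 1.} First I would build a concrete simplicial complex $K$ together with a cover by categorical subcomplexes witnessing a small value of $\textnormal{scat}\,K$; for concreteness suppose the cover has pieces $U_0,\dots,U_n$, so that $\textnormal{scat}\,K \leqslant n$. To certify that each $U_i$ is categorical I would exhibit, for each $i$, an explicit chain of contiguous maps realizing $i_{U_i} \sim c_{v}$ for a suitable vertex $v \in K$. Crucially, I would arrange every $U_i$ to be \emph{connected}, since this feature is exactly what makes Part 2 work. The genuinely hard direction is the matching lower bound $\textnormal{gscat}\,K \geqslant n+1$: it does not suffice to display a cover, one must rule out \emph{every} cover of $K$ by $n+1$ strongly collapsible subcomplexes. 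I expect this to rest on a combinatorial rigidity argument exploiting that a strongly collapsible subcomplex is necessarily connected and has a core equal to a single point, so that the distribution of the maximal simplices of $K$ among too few such pieces forces one of them to contain a configuration with no dominated vertex, contradicting strong collapsibility. Together with Proposition \ref{prop1} this yields $\textnormal{scat}\,K < \textnormal{gscat}\,K$.

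\emph{Part 2.} With such a $K$ in hand, I would produce $M$ with $M \seco K$ and $\textnormal{gscat}\,M < \textnormal{gscat}\,K$ by a controlled strong expansion. The idea is to adjoin dominated vertices to $K$ so that each connected categorical piece $U_i$ grows to a subcomplex $V_i \supseteq U_i$ that is strongly collapsible in $M$, while the enlarged $V_0,\dots,V_n$ still cover $M$. Because the adjoined vertices are dominated (in a suitable order of deletion), removing them recovers $K$, so indeed $M \seco K$. The connectedness of the original $U_i$ arranged in Part 1 is precisely what lets the contiguity chain realizing $i_{U_i} \sim c_v$ be absorbed into an honest strong collapse of $V_i$ to a point. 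This produces a geometric cover of $M$ of the same cardinality $n+1$ as the categorical cover of $K$, hence $\textnormal{gscat}\,M \leqslant n = \textnormal{scat}\,K$.

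Finally I would assemble the inequalities. Since the simplicial LS category is a strong homotopy invariant and $M \sim K$, we have $\textnormal{scat}\,M = \textnormal{scat}\,K = n$, and Proposition \ref{prop1} gives $n = \textnormal{scat}\,M \leqslant \textnormal{gscat}\,M$. Combined with the explicit geometric cover of $M$ from Part 2, this pins down $\textnormal{gscat}\,M = n = \textnormal{scat}\,K < \textnormal{gscat}\,K$, while Theorem \ref{prop2} confirms consistency, since $\textnormal{gscat}\,M \leqslant \textnormal{gscat}\,K$ must hold. The main obstacle is the lower bound of Part 1: constructing the complex and its small categorical cover is routine, but proving that no family of $n+1$ strongly collapsible subcomplexes can cover $K$ requires a genuine obstruction rather than a counting argument. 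A secondary delicate point is verifying in Part 2 that the expansion simultaneously preserves the strong-collapse relation $M \seco K$ and promotes each connected piece $U_i$ to a strongly collapsible $V_i$.
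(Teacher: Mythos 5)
Your outline reproduces the paper's strategy faithfully in its broad strokes: a concrete $K$ with $\textnormal{scat}\,K < \textnormal{gscat}\,K$ whose minimal categorical cover consists of connected subcomplexes, followed by a strong expansion $M \seco K$ obtained by adjoining dominated vertices so that a geometric cover of $M$ of the smaller cardinality appears, and the concluding arithmetic ($\textnormal{gscat}\,M = \textnormal{scat}\,K < \textnormal{gscat}\,K$) matches the paper's. But the proposition is an existence statement, and as written your proposal is an architecture, not a proof: you never exhibit $K$, its cover, the lower-bound argument, or $M$. The two steps you yourself flag as hard are exactly where all the content lives, and neither admits the generic argument you gesture at. For the lower bound, the paper takes a specific $K$ (a triangulated sphere with three points identified, creating two handles, with $\textnormal{scat}\,K = 1$) and proves $\textnormal{gscat}\,K \geqslant 2$ in the Appendix by an explicit case analysis: a strongly collapsible subcomplex of $K$ cannot contain certain essential loops, and chasing which $2$-simplices are forced into $A$ or into $B$ eventually traps a forbidden loop inside one of them. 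Your appeal to ``combinatorial rigidity'' forcing ``a configuration with no dominated vertex'' names the right flavour of obstruction but supplies no argument, and no counting-style argument is available.

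The more serious soft spot is Part 2, where you assert that connectedness of the categorical pieces ``is precisely what lets'' the contiguity chain be absorbed into a strong collapse after expansion. This is unjustified, and as a general principle it fails: in the paper's $K$ the green categorical subcomplex is connected yet is contained in \emph{no} strongly collapsible subcomplex of $K$ at all, which is exactly why the expansion is needed and why it is delicate. The paper's fix adds a single vertex $a$ coning off a carefully chosen region, and for $a$ to be dominated (so that $M \seco K$ holds) every maximal simplex of $M$ containing $a$ must contain a common vertex --- the construction arranges this to be $(0,0)$. Moreover, the resulting geometric cover of $M$ is not literally an enlargement $V_i \supseteq U_i$ of the categorical pieces of $K$: the subcomplex $A$ consists of the three $3$-simplices on $a$ and $(0,0)$ together with a triangle fan at $(0,0)$, and $B$ consists of all remaining maximal simplices together with the triangles $\{a,(2,i),(2,j)\}$, each verified to strongly collapse to a vertex by an explicit sequence of dominated-vertex deletions. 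So your plan identifies the correct landmarks, including the key heuristic that the example must be sought where $\textnormal{scat}$ and $\textnormal{gscat}$ differ, but until the complex, the Appendix-style non-coverability argument, and the verification of the two strong collapses in $M$ are actually supplied, the proposition is not proved.
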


\begin{corollary}
  The geometric simplicial LS category is not strong homotopy invariant.
\end{corollary}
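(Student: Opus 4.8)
The plan is to derive this directly from Proposition \ref{myprop}, so that the essential content of the corollary is already packaged into the construction of the example. First I would pin down precisely what is being asserted: an assignment $K \mapsto f(K)$ is a \emph{strong homotopy invariant} if $K \sim L$ implies $f(K) = f(L)$, where $\sim$ denotes having the same strong homotopy type. Since the strong homotopy type is the equivalence relation generated by strong collapses and expansions, it suffices to exhibit a single strong collapse under which $\text{gscat}$ changes value.

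Next I would invoke Proposition \ref{myprop}, which supplies simplicial complexes $M$ and $K$ with $M \seco K$ and $\text{gscat}\, M < \text{gscat}\, K$. Because $M \seco K$ is by definition a sequence of elementary strong collapses, $M$ and $K$ have the same strong homotopy type, that is $M \sim K$. Were $\text{gscat}$ a strong homotopy invariant, we would necessarily have $\text{gscat}\, M = \text{gscat}\, K$, contradicting the strict inequality just quoted. Hence $\text{gscat}$ cannot be a strong homotopy invariant, which is exactly the assertion of the corollary.

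I expect the only genuine obstacle to lie upstream, in establishing Proposition \ref{myprop} itself; the corollary is a purely formal consequence once the example is in hand. Concretely, the difficulty is not in the one-line logical deduction above but in (i) constructing a complex $K$ witnessing the strict inequality $\text{scat}\, K < \text{gscat}\, K$ of Proposition \ref{prop1}, and (ii) producing the strong collapse $M \seco K$ that realizes the strict inequality in Theorem \ref{prop2}. Both are carried out in sections $2$ and $3$; the present corollary merely records their consequence for strong homotopy invariance.
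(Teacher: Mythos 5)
Your proposal is correct and coincides with the paper's own reasoning: the corollary is stated there as a direct consequence of Proposition \ref{myprop}, using exactly your observation that $M \seco K$ implies $M \sim K$, so the strict inequality $\textnormal{gscat}\, M < \textnormal{gscat}\, K$ rules out strong homotopy invariance. You correctly identify that all the substantive work lives in the construction of $K$ and $M$ in sections $2$ and $3$, which is precisely how the paper organizes the argument.
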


\begin{remark}
  The respective geometric realisations provide examples for the topological
  analogues of the strict inqualities. Of course, examples to these have
  already been known.
\end{remark}

\section{Example of strictly bigger geometric simplicial than simplicial LS category}

In this section, we shall construct a simplicial complex $K$ such that
$\text{gscat} K > \text{scat} K$. Let $K$ be the simplicial complex with set
of vertices
\[ V_K = \{ (k, l) | - 2 \leqslant k \leqslant 2, 0 \leqslant l \leqslant 2 \}
\]
and simplicial structure as shown in the figure below, where the rightmost and
leftmost vertices, the rightmost and leftmost edges, and the vertices with
coordinates $(0, 0)$ (noted by the bulk points) are identified:
\begin{figure}[H]\centering
	{\includegraphics[height=6cm,keepaspectratio]{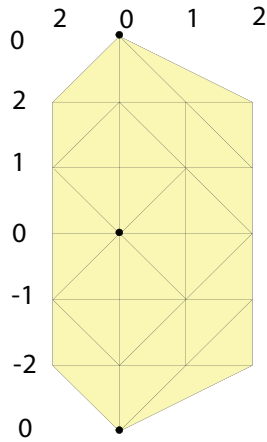}}
  \caption{The simplicial complex $K$. The rightmost and leftmost edges and\label{fig1}
  vertices, as well as the $3$ bulk points corresponding to the vertex $(0,
  0)$, are identified.}
\end{figure}
It has degree 2 and 15 vertices, $45$ edges and 30 $2$-simplices.
\begin{figure}[H]\centering
	\includegraphics[height=5cm,keepaspectratio]{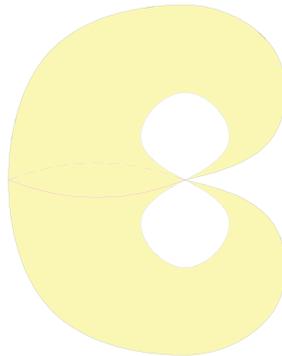}
  \caption{A geometric realisation of $K$. It is a sphere in which we have
  created $2$ handles by identifying $3$ points together.}
\end{figure}
\newpage
It is clear that $K$ is not strongly collapsible (its geometric realisation
is not even contractible), hence $\text{scat} K > 0$. The following
categorical cover has cardinality $2$, hence $\text{scat} K = 1$.
\begin{figure}[H]\centering
	\includegraphics[height=6cm,keepaspectratio]{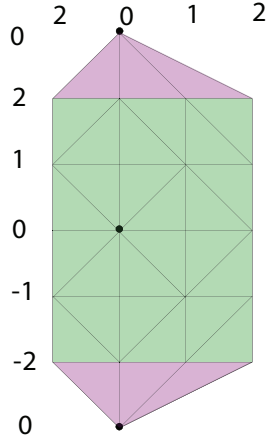}
  \caption{A categorical cover of $K$}
\end{figure}
However, this cover is not geometric. The green subcomplex is not strongly
collapsible, as its core is not trivial. It can be strongly collapsed into the
loop $\{ \{ (0, 2), (0, 0) \}, \{ (0, 0), (0, 1) \}, \{ (0, 2), (0, 1) \} \}$
that cannot be further strongly collapsed as it contains no dominated points.

The green subcomplex is categorical, but no strongly collapsible subcomplex
contains it. This is an important fact as, otherwise, just by extending it we
would have gotten a geometric cover, and thus the simplicial and the geometric
simplicial LS categories would be equal. Hence, the existence of a categorical
subcomplex which cannot be contained in a bigger strongly collapsible
subcomplex is necessary for the simplicial and geometric simplicial LS
categories to be different.

It is not difficult to see through combinatorial arguments that it is not
possible to cover $K$ with $2$ strongly collapsible subcomplexes.
Heuristically, assume that $K$ is covered by strongly collapsible subcomplexes $A$ and $B$, and wlog that one of the triangles on the top of the figure is solely
contained in $A$. For the moment, let's treat the vertices $(0,0)$ in the $3$ different places in Figure \ref{fig1} as not being identified. As no sequence of edges in $A$ may connect the top $(0,0)$ with the middle or the bottom ones (as these loops cannot be in any strongly collapsible subcomplex in $K$), there must be a maximal connected subcomplex in $A$ that contains this triangle, and no triangles in the middle or bottom that contain $(0,0)$. The boundary of this must then reside in $A\cap B$, and it must contain a loop that winds around one of the $(0,0)$'s. But, for such a loop, there can only be a single minimal strongly collapsible subcomplex containing it. This subcomplex cannot be in both $A$ and $B$ as it must either contain the middle or bottom $(0,0)$, thus it cannot be contained in $A$, or the top $(0,0)$, thus it cannot contain $B$ by our initial assumptions (that there is a triangle in the top not contained in $B$). Thus $K$ cannot be covered by $2$ strongly collapsible subcomplexes. For a more detailed proof, see
Appendix. Hence, $\text{gscat} K > 1$.

The following figure shows a geometric cover of cardinality $3$, which implies
that $\text{gscat} K = 2$. Thus, the first part of Proposition \ref{myprop} is
proven.

\begin{figure}[H]\centering
	\includegraphics[height=6cm,keepaspectratio]{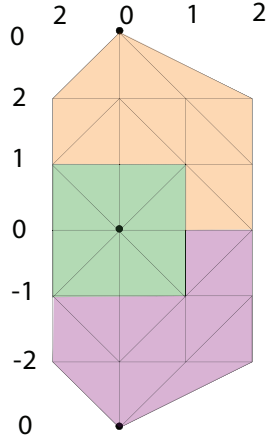}
  \caption{A geometric cover of $K$ consisting of $3$ strongly collapsible
  subcomplexes.}
\end{figure}

\section{A strong collapse that increases the geometric simplicial LS
category}

Based on the previous example, we consider $K$ as before. Let $M$ be a
simplicial complex with vertices
\[ V_M = V_K \cup \{ a \} \]
and
\begin{align*}
M = K &\cup \{ \{ a \} \} \cup \{ \{ a, (0, 0) \}, \{ a, (2, 0) \}, \{ a,
(2, 1) \}, \{ a, (2, 2) \} \}\\ &\cup \left\{ \{ a, (0, 0), (2, 0) \}, \{ a,
(0, 0), (2, 1) \}, \{ a, (0, 0), (2, 2) \}\right\}\\&\cup\left\{ \{ a, (2, 0), (2, 1) \}, \{ a,
(2, 1), (2, 2) \}, \{ a, (2, 2), (2, 0) \} \right\} \\&\cup \{ \{ a, (0, 0),
(2, 0), (2, 1) \}, \{ a, (0, 0), (2, 1), (2, 2) \}, \{ a, (0, 0), (2, 2),
(2, 0) \} \}
\end{align*}
What we have essentially constructed is a simplicial complex in which we have
added a new vertex $a$ and filled in the $3$-simplices that contain $a$, $(0,
0)$ and two of the $(2, 0), (2, 1), (2, 2)$. We see that the new vertex $a$ is
dominated by $(0, 0)$, so $M \seco K$.

\begin{figure}[H]
  \raisebox{0\height}{\includegraphics[height=6cm,keepaspectratio]{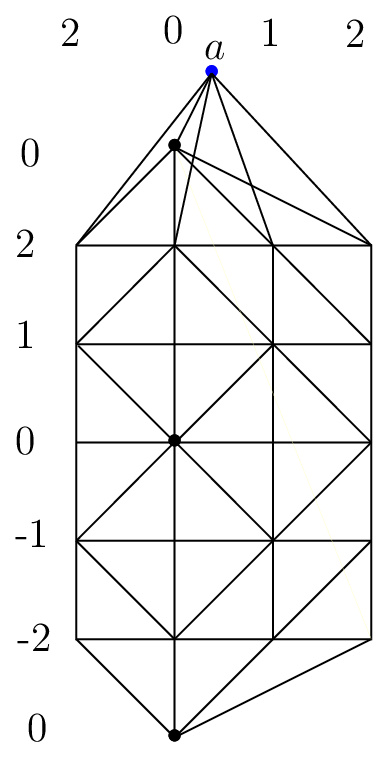}}
  \caption{The $1$-skeleton of $M$. The bulk black points and the rightmost
  and leftmost vertices/edges are identified.}
\end{figure}

Let $A \subset M$ be the subcomplex whose set of maximal simplices is
\begin{align*}
&\{ \{ a, (0, 0), (2, 0), (2, 1) \}, \{ a, (0, 0), (2, 1), (2, 2) \}, \{ a,
(0, 0), (2, 2), (2, 0) \} \}\cup\\  &\{ \{ (0, 0), (- 2, 0), (- 2, 1) \}, \{
(0, 0), (- 2, 1), (- 2, 2) \}, \{ (0, 0), (- 2, 2), (- 2, 0) \} \} 
\end{align*}
We can easily see that $A$ strongly collapses to $(0, 0)$, by first deleting
the dominated vertex $a$, and then the other vertices. Let $B_0$ be the
subcomplex consisting of all the maximal simplices of $M$ that are not in $A$,
$B_1$ the one consisting of the $2$-simplices $\{ \{ a, (2, 0), (2, 1) \}, \{
a, (2, 1), (2, 2) \}, \{ a, (2, 2), (2, 0) \} \}$, and $B = B_0 \cup B_1$.
Again, one can show that $B$ strongly collapses to $a$: $B_0$ strongly
collapses to the simplicial complex consisting of three $1$-simplices $\{ \{
(2, 0), (2, 1) \}, \{ (2, 1), (2, 2) \}, \{ (2, 2), (2, 0) \} \}$, and then
$(2, 0), (2, 1)$ and $(0, 1)$ are dominated by $a$, hence $B_0 \cup B_1$
strongly collapses to $a$.

Thus, we have constructed a cover of $M$ consisting of $2$ strongly
collapsible subcomplexes, and hence $\text{gscat} M = 1$, while we have seen
that $\text{gscat} K$=2 and $M \seco K$, showing the second part of
Proposition \ref{myprop}.

\appendix\section*{Appendix}

\begin{lemma}
  The simplicial complex $K$, as in section $2$, cannot be covered by two
  strongly collapsible subcomplexes.
\end{lemma}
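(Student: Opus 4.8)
The plan is to argue by contradiction using only that a strongly collapsible complex is contractible, together with the global topology of $K$. Suppose $K = A \cup B$ with $A$ and $B$ strongly collapsible. A strongly collapsible complex is in particular collapsible, hence contractible, so $\tilde H_*(A) = \tilde H_*(B) = 0$. I would first record two facts about $K$ that are visible from the figure and the counts $V - E + F = 15 - 45 + 30 = 0$. Since $3\cdot 30 = 2\cdot 45$, every edge lies in exactly two $2$-simplices (this is the ``degree $2$'' condition), so $K$ is a closed orientable pseudo-surface whose only singular point is the pinch at the identified vertex $(0,0)$, where three sheets meet. Moreover, as the figure states, $K$ is a $2$-sphere with two handles created by identifying three points, so $K \simeq S^2 \vee S^1 \vee S^1$; thus $H_1(K) = \mathbb Z^2$, $H_2(K) = \mathbb Z$, the two circles being the handle loops that run between the three copies of $(0,0)$.

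Next I would feed the cover $K = A \cup B$ into the reduced Mayer--Vietoris sequence. Because $A$ and $B$ are acyclic, the sequence collapses to isomorphisms $\tilde H_n(K) \cong \tilde H_{n-1}(A \cap B)$ for every $n \geqslant 1$. In degree $2$ this yields $H_1(A \cap B) \cong H_2(K) = \mathbb Z$, so $A \cap B$ carries an essential $1$-cycle $\gamma$; in degree $1$ it yields $\tilde H_0(A \cap B) \cong H_1(K) = \mathbb Z^2$, so $A \cap B$ has exactly three connected components, a pleasant sanity check matching the three copies of $(0,0)$. Since $A$ is acyclic, $\gamma$ bounds in $A$, and likewise in $B$; in particular $\gamma$ bounds in $K$, so $[\gamma] = 0$ in $H_1(K)$ and $\gamma$ is a separating curve on the pseudo-surface.

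The geometric heart of the argument is then a filling-uniqueness statement. As $K$ is $2$-dimensional, $Z_2(K) = H_2(K) = \mathbb Z\cdot[K]$, where $[K]$ is the fundamental class supported on all $30$ triangles; hence any $2$-chain bounding $\gamma$ differs from a fixed one by a multiple of $[K]$, and the only subcomplexes of $K$ that fill $\gamma$ are the two complementary unions of triangles $R_1, R_2$ cut out by the separating curve $\gamma$ (any other filling chain has full support, forcing the subcomplex to equal $K$, which is not contractible). Now $\gamma$ bounds in $A$, so $A \supseteq R_A$ for some $R_A \in \{R_1, R_2\}$, and similarly $B \supseteq R_B$. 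If $R_A = R_B$ then this common region lies in $A \cap B$ and kills $\gamma$ in $H_1(A \cap B)$, contradicting that $\gamma$ is essential there; hence $\{R_A, R_B\} = \{R_1, R_2\}$. But the two complementary regions must distribute the three pinched copies of $(0,0)$ between them, so one region contains two identified copies and therefore a non-contractible handle loop. Whichever of $A, B$ contains that region then fails to be acyclic, giving the final contradiction; so $K$ admits no cover by two strongly collapsible subcomplexes.

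The main obstacle will be the filling-uniqueness step, and specifically controlling the pinch point. One must confirm that the essential cycle $\gamma$ produced by Mayer--Vietoris is an honest separating curve whose two complementary triangle-sets are each forced into $A$ and $B$, and that $\gamma$ running through the singular vertex $(0,0)$ does not manufacture extra fillings or spoil the ``two complementary regions'' dichotomy. This is precisely where the local combinatorics of the three caps at $(0,0)$ must be used, and it is the part that the heuristic in the main text resolves by hand; everything else is formal once the pseudo-surface structure (the $V,E,F$ counts, edge degree $2$, and orientability) has been established.
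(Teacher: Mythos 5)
Your Mayer--Vietoris frame is correct as far as it goes: strong collapsibility does imply contractibility, $\tilde H_n(K)\cong \tilde H_{n-1}(A\cap B)$ for $n\geqslant 1$ does follow when $A,B$ are acyclic, and the computations $H_1(A\cap B)\cong\mathbb{Z}$, $\tilde H_0(A\cap B)\cong\mathbb{Z}^2$ are right. But the step you yourself flag as the ``geometric heart'' is a genuine gap, not a verification, and it is exactly where the whole difficulty of the lemma lives. Concretely, two things break. First, the class $\gamma$ produced by the connecting homomorphism is only a homology class: a representing cycle need not be a connected simple closed curve, so there is no a priori ``separating curve cutting $K$ into two complementary regions $R_1,R_2$.'' What the algebra actually gives is this: since $Z_2(K)=\mathbb{Z}\cdot[K]$, the two fillings $c_A,c_B$ (supported in $A$, $B$ respectively) satisfy $c_A-c_B=k[K]$ with $k\neq 0$, whence $\operatorname{supp}(c_A)\cup\operatorname{supp}(c_B)$ is \emph{all} $30$ triangles --- a much weaker conclusion than your clean dichotomy. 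A filling chain can take several distinct coefficient values, and then $c_A+m[K]$ has proper support for as many values of $m$ as there are distinct coefficients, so ``only two subcomplex fillings'' and the step $\{R_A,R_B\}=\{R_1,R_2\}$ are unjustified. Second, even granting two complementary regions, the pigeonhole ``one region contains two copies of $(0,0)$, hence a handle loop'' needs that region to contain a path joining the two sheets; nothing prevents $\gamma$ from passing through the singular vertex, in which case the ``regions'' can each meet all three sheets without containing any handle loop, and the pinch vertex lies in both $A$ and $B$ in any case.

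It is also worth noting that your plan uses only acyclicity of $A$ and $B$, which is strictly weaker than strong collapsibility, so if completed it would prove a stronger statement --- and it is not clear that the stronger statement is true, or at least it is not what the paper establishes. The paper's Appendix proof is a direct combinatorial forcing argument: starting from the fact that no strongly collapsible subcomplex may contain certain loops through the pinch vertex, it propagates membership of individual triangles ($a1,a2\in A$, $c2,c3\in B$, and so on) until $A$ is forced to contain the meridian loop $\{\{(-1,2),(-1,0)\},\{(-1,0),(-1,1)\},\{(-1,1),(-1,2)\}\}$ and cannot strongly collapse to a point. Note that this final meridian is null-homologous in $K$ (it bounds the left cap), so the contradiction there cannot be reached by an ``essential loop in $K$ forces non-acyclicity'' argument alone; the paper is genuinely using strong-collapsibility obstructions (cores, absence of dominated vertices) together with the forced triangle assignments, which is precisely the local combinatorial control at the pinch that your outline defers. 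In short: the formal homological scaffolding is fine, but the filling-uniqueness/two-region dichotomy is missing, and without it the proof does not close.
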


\begin{proof}
  We will argue by contradiction. The idea is that a strongly collapsible
  subcomplex may not contain certain loops, e.g. a loop starting from $(0, 0)$
  at the top of the figure and ending at $(0, 0)$ in the bottom or the middle.
  Assume that there exist two strongly collapsible subcomplexes $A, B \subset
  K$ such that $A \cup B = K$. Out of the three edges $\{ (0, 0), (2, 0) \}$,
  $\{ (2, 0), (1, 0) \}$ and $\{ (1, 0), (0, 0) \}$ one has to belong solely
  to $A$ and one solely to $B$, as no strongly collapsible subcomplex may
  contain the loop $\left\{\{ (0, 0), (2, 0) \}, \{ (2, 0), (1, 0) \},\{
  (1, 0), (0, 0) \}\right\}$. Let's enumerate the triangles in the following
  way and assume that $\{ (0, 0), (2, 0) \} \nin B$ and $\{ (1, 0), (0, 0) \}
  \nin A$. The other cases can be argued in a similar way.
  
  \begin{figure}[H]\centering
  	\includegraphics[height=6cm,keepaspectratio]{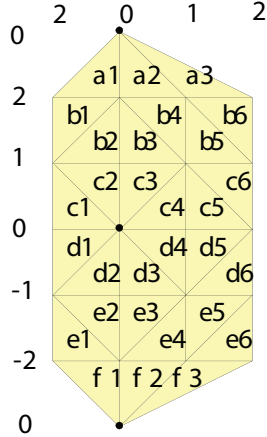}
    \caption{An enumeration of the $2$-simplices of $K$.}
  \end{figure}
  
  We must have that $a 1, a 2 \in A$ and $c 2, c 3 \in B$. Then, $\{ (1, 1),
  (1, 2) \} \nin B$, as if the loop $\{ \{ (0, 0), (1, 2) \}, \{ (1, 1), (1,
  2) \}, \{ (1, 1), (0, 0) \} \}$ is contained in $B$, $B$ cannot be strongly
  collapsed to a point. Hence $b 5, c 6 \in A$. As $A$ may not contain any
  loops in the contiguity class of $\{ \{ (0, 0), (2, 0) \}, \{ (2, 0), (1, 0)
  \}, \{ (1, 0), (2, 0) \} \}$, it must be that $c 1, c 4, d 1 \in B$.
  
  Similarly, as $B$ may not contain the loop $\{ \{ (0, 2), (0, 0) \}, \{ (0,
  0), (0, 1) \}, \{ (0, 1), (0, 2) \} \}$, or any loop that can be deformed to
  it while fixing $(0, 0)$, $c 5, d 5 \in A$, and by the previous argument $d
  3, d 4 \in B$, and then again $d 6 \in A$. This implies $d 2 \in B$, and
  then $e 5 \in A$.
  
  Thus, assuming that $\{ (0, 0), (2, 0) \} \nin B$ and $\{ (1, 0), (0, 0) \}
  \nin A$, out of necessity we have shown:
  
  \begin{figure}[H]\centering
  	\includegraphics[height=6cm,keepaspectratio]{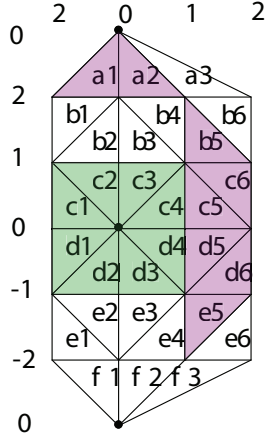}
    \caption{$A$ in purple and $B$ in green.}
  \end{figure}
  
  The edge $\{ (0, 0), (- 2, 1) \}$ cannot be contained in $A$, as then it
  would not be strongly collapsible containing a loop from the top $(0, 0)$ to
  the bottom one, hence $f 2, f 3 \in B$. But then $B$ cannot contain $\{ (-
  1, 0), (- 2, 0) \}$, hence $e 2, e 3 \in A$. But then there is a loop $\{ \{
  (- 1, 2), (- 1, 0) \}, \{ (- 1, 0), (- 1, 1) \}, \{ (- 1, 1), (- 1, 2) \} \}
  \subset A$, and $A$ cannot strongly collapse to a point.
  
  In a similar way, one can reach a contradiction by assuming $\{ (0, 0), (2,
  0) \} \nin B$ and $\{ (2, 0), (1, 0) \} \nin A$, or $\{ (2, 0), (1, 0) \}
  \nin B$ and $\{ (1, 0), (0, 0) \} \nin A$. Hence $K$ cannot be covered by
  $2$ strongly collapsible subcomplexes.
\end{proof}

\begin{acknowledgements}The author would like to thank E.Minuz
for bringing this problem to their attention and for fruitful discussions on
LS category.\end{acknowledgements}

\bibliographystyle{plain}
\bibliography{lscat}

\begin{thebibliography}{1}

\bibitem{aaronson}
Seth Aaronson and Nicholas~A. Scoville.
\newblock Lusternik\textendash{}{{Schnirelmann}} category for simplicial
  complexes.
\newblock {\em Illinois Journal of Mathematics}, 57(3):743--753, 2013.

\bibitem{Barmak2012}
Jonathan~Ariel Barmak and Elias~Gabriel Minian.
\newblock Strong homotopy types, nerves and collapses.
\newblock {\em Discrete {\&} Computational Geometry}, 47(2):301--328, Mar 2012.

\bibitem{scat1}
D.~Fern{\'a}ndez-Ternero, E.~Mac{\'\i}as-Virg{\'o}s, and J.~A. Vilches.
\newblock Lusternik\textendash{}{{Schnirelmann}} category of simplicial
  complexes and finite spaces.
\newblock {\em Topology and its Applications}, 194(Supplement C):37--50,
  October 2015.

\bibitem{scat2}
Desamparados Fern{\'a}ndez-Ternero, Enrique Mac{\'\i}as-Virg{\'o}s, Erica
  Minuz, and Jos{\'e}~Antonio Vilches.
\newblock Simplicial {{Lusternik}}-{{Schnirelmann}} category.
\newblock {\em arXiv:1605.01322 [math]}, May 2016.

\bibitem{erica}
Erica Minuz.
\newblock The simplicial lusternik-schnirelmann category, master thesis.
\newblock \url{http://www.math.ku.dk/~moller/students/erica_minuz.pdf }.

\end{thebibliography}

\end{document}